\definecolor{mygreen}{rgb}{0,0.6,0}
\definecolor{mygray}{rgb}{0.5,0.5,0.5}
\definecolor{mymauve}{rgb}{0.58,0,0.82}
\newtheorem{thm}{Theorem}
\newtheorem{propo}[thm]{Proposition}
\newtheorem{cor}[thm]{Corollary}
\theoremstyle{definition}
\newtheorem{pb}[thm]{Problem}
\theoremstyle{remark}
\newtheorem{rmk}[thm]{Remark}
\newcommand{\CC}{\mathds{C}}
\newcommand{\RR}{\mathds{R}}
\newcommand{\ZZ}{\mathds{Z}}
\newcommand{\FF}{\mathds{F}}
\newcommand{\PP}{\mathds{P}}
\newcommand{\BB}{\mathds{B}}
\newcommand{\A}{\mathcal{A}}
\newcommand{\B}{\mathcal{B}}
\renewcommand{\L}{\mathcal{L}}
\renewcommand{\epsilon}{\varepsilon}
\newcommand{\m}{\mathfrak{m}}
\newcommand\PC{\CC\PP}
\newcommand{\PCc}{\Check{\PC}}
\DeclareMathOperator{\Sing}{Sing}
\DeclareMathOperator{\Aut}{Aut}
\DeclareMathOperator{\PGL}{PGL}
\DeclareMathOperator{\gr}{gr}
\begin{document}

\title[Fundamental groups of real arrangements and torsion in LCS quotients]{Fundamental groups of real arrangements and torsion in the lower central series quotients}
\author{Enrique Artal Bartolo}
\address{
Departamento de Matem\'aticas, IUMA\\ 
Universidad de Zaragoza\\ 
C.~Pedro Cerbuna 12\\ 
50009 Zaragoza, Spain
}
\email{artal@unizar.es}

\author{Beno\^it Guerville-Ball\'e}

\author{Juan Viu-Sos}
\address{
Instituto de Ci\^encias Matem\'aticas e de Computa\c c\~ao,
Universidade de S\~ao Paulo,
Avenida Trabalhador Sancarlense, 400 - Centro,
S\~ao Carlos - SP, 13566-590, Brasil
}
\email{benoit.guerville-balle@math.cnrs.fr,jviusos@math.cnrs.fr}


%
\subjclass[2010]{
52B30, 
14F35, 
32Q55, 
54F65, 
14N20, 
32S22 
}		

\begin{abstract}
	By using computer assistance, we prove that the fundamental group of the complement of a real complexified line arrangement is not determined by its intersection lattice, providing a counter-example for a problem of Falk and Randell. We also deduce that the torsion of the lower central series quotients is not combinatorially determined, which gives a negative answer to a question of Suciu.
\end{abstract}

\maketitle


\section*{Introduction}
	
The \emph{topology} of a line arrangement $\A=\{L_0,\dots,L_n\}$ is the homeomorphism type of the pair formed by $\A$ and the complex projective plane $\PC^2$. It is well-known, since Rybnikov~\cite{Rybnikov}, that the \emph{combinatorics} of $\A$ (or equivalently the underlying matroid or intersection lattice of $\A$) does not determine its topology. 
Before~\cite{GueViu:config} there were three known examples of \emph{Zariski pairs}
of line arrangements, i.e. a pair of intersection lattice-equivalent arrangements with different topologies.  
The first one, by Rybnikov~\cite{Rybnikov,accm:03a}, is a pair of line arrangements
admitting no real equation and distinguished by the fundamental group of the complement $G_\A$;
the last one (with coefficients conjugated in a complex Galois-extension) was first distinguished
with a linking property~\cite{Gue:4tuple} and later  by their fundamental group~\cite{ACGM:ZP}; the remaining one~\cite{ACCM:real_ZP} is the only example which can be realized with all lines defined by real coefficients, and its topology is distinguished by the \emph{braid monodromy}, see~\cite{Chisini,Cheniot,Moishezon} in the context of algebraic plane curves and surfaces, and~\cite{Salvetti1,CohenSuciu} for more details in the case of arrangements. It is worth noticing that, in this last example, we currently do not know if their fundamental groups are isomorphic or not, but it turns out that their profinite completions are
(since their equations are conjugated in a real Galois-extension).

In the survey paper~\cite{FalkRandell2}, the authors suggest the following problem for real line arrangements:
\begin{pb}[Falk-Randell]\label{pb:FR}
	Prove that the underlying matroid of a complexified arrangement determines the homotopy type of the complement, or find a counter-example.
\end{pb}

As it is noted in the same paper, some partial results on this problem are already known. Salvetti~\cite{Salvetti} proved that the homotopy type of the complement of a real line arrangement is determined by its oriented matroid. Later, Jiang and Yau showed that the diffeomorphism type of the complement determines the underlying matroid~\cite{JiangYau2}
(see also \cite{pasq:99}), and that the converse is also true if we add a combinatorial condition~\cite{JiangYau}. Finally, Cordovil~\cite{Cordovil} proved that the combinatorics together with a geometrical condition (on the real picture of the arrangement) also determines $G_\A$.

In the other direction, a finite presentation of the fundamental group of complex algebraic plane curves was determined by Zariski and Van Kampen~\cite{zariski, vanKampen}. As was remarked by Chisini~\cite{Chisini}, the previous presentation involves a more powerful topological invariant of curves: the braid monodromy, which was extensively studied by Moishezon, for example in~\cite{Moishezon,MoiTei}. Later on, Libgober showed that the braid monodromy contains all the information of the homotopy type of the curve complement~\cite{Libgober}. The braid monodromy for real complexified arrangements was extensively studied by Salvetti~\cite{Salvetti1,Salvetti}, Hironaka~\cite{Eriko}, and Cordovil and Fachada~\cite{CordovilFachada,Cordovil}; the real complexified Zariski pair of~\cite{ACCM:real_ZP} was distinguished via a
non-generic braid monodromy. The latter implies, due to~\cite{car:xx}, that the braid monodromy is not determined by the combinatorics, answering a question of Cohen and Suciu~\cite[Sec.~1.3]{CohenSuciu}.

%

A natural approach to find a counter-example for Problem~\ref{pb:FR} is then the study of $G_\A$ and its invariants. In~\cite{Suciu}, Suciu remarks the presence of torsion elements in the lower central series quotients of such fundamental groups for complex arrangements. In this way, he queries in the general case:
\begin{pb}[Suciu]\label{pb:Suciu}
	Is the torsion in the quotients of the lower central series combinatorially determined?
\end{pb}

In the present paper, we give a negative answer to above problems exhibiting an explicit example. Indeed, considering one of the real complexified Zariski pairs recently constructed by the last two named authors in~\cite{GueViu:config}, we prove in Corollary~\ref{cor:fondagrp} that the fundamental groups of their complements are not isomorphic. More precisely, in Theorem~\ref{thm:main}, we prove that the $4^\textsc{th}$ and the $5^\textsc{th}$~quotient groups of the lower central series of these fundamental groups differ by a 2-torsion element 
(the computations, made using \texttt{GAP}~\cite{GAP4}, mainly the package~\texttt{nq}~\cite{nq}, are described in Appendix~\ref{sec:appendix}).
This provides a negative answer to Problem~\ref{pb:Suciu}, and thus to Problem~\ref{pb:FR} too.\\

\noindent {\bf Conventions.} For any elements $x,y$ of a group $G$, we denote
$x^y=y^{-1}xy$, $y*x=yxy^{-1}$, as well as $[x,y]=xyx^{-1}y^{-1}$. 
If $L_{i_1}, \ldots, L_{i_k}$ are different lines in $\PC^2$ having a common intersection point, this one is denoted by $P_{i_1,\ldots,i_k}$.\\


\noindent\textbf{Acknowledgments.} 
The authors would like to thank the referee for its valuable comments and suggestions which helped to improve the manuscript. The second and the third named authors are supported by postdoctoral grants \#2017/15369-0 and \#2016/14580-7, respectively, by \emph{Funda\c{c}\~ao de Amparo \`a Pesquisa do Estado de S\~ao Paulo (FAPESP)}. The first and third named authors are partially supported by MTM2016-76868-C2-2-P

\numberwithin{thm}{section}

\bigskip
\section{Fundamental group of the complement and lower central series quotients}\label{sec:funda}
	
A \emph{line arrangement} $\A=\{L_0,\cdots,L_n\}$ is a finite collection of distinct lines in the complex projective plane $\PC^2$. If there exists a system of coordinates of $\PC^2$ such that each line of $\A$ is defined by an $\RR$-linear form, then the arrangement is called \emph{real complexified}.
Denote by $\Sing(\A)=\{L\cap L'\mid L,L'\in\A,\ L\neq L'\}$ the set of singular points of the underlying projective variety $\bigcup\A=\bigcup_{L\in\A}L$.
The \emph{combinatorics} of an arrangement $\A$ is described by its \emph{intersection lattice} $\L(\A)=\{\emptyset\neq \bigcap_{L\in\B}L \mid \B\subset\A\}$ (or equivalently, by its underlying matroid), i.e. the set of singular points and lines of the arrangement as well as the whole $\CC\PP^2$, ordered by reverse inclusion and with rank function given by codimension. The \emph{complement} of $\A$ is $M(\A)=\PC^2\setminus \bigcup\A$, i.e. a smooth quasiprojective manifold. While the complement of any hypersurface of $\PC^N$ is affine, in our case this result becomes easier since $\PC^2\setminus L_0\cong\CC^2$.


As a first step for our purposes we need to compute $G_\A=\pi_1(M(\A))$, the fundamental group of the complement
of some real complexified arrangements. In such case, Randell~\cite{Randell} gives an algorithm to compute a finite presentation of $G_\A$ from the real picture of the arrangement,
see also~\cite{Salvetti1}. This has been generalized by Arvola in~\cite{Arvola} to any complex line arrangement.

\subsection{Computation of the fundamental group}\label{sec:group}
\mbox{}

From now on, we will assume that $\A$ is a complexified real arrangement. Let us recall how to compute the presentation of $G_\A$ given by Randell. We assume that $L_0$ is the line $z=0$ and is considered as the line at infinity. Let $\A^\RR=\A\cap\RR^2$ be the real picture of $\A$. We also assume that no line of $\A$ is of the form $x=\alpha$ with $\alpha\in\RR$. Assign to each line of $\A^\RR$ a meridian contained in a line $x=\beta$ containing no multiple point.
Let denote the meridian of $L_i$ by $\m_i$.

Reading the picture $\A^\RR$ from left to the right, we assign to each smooth part in $\A^\RR$ (i.e. the segments bounded by the singularities) a conjugate of the meridian of the associated line. This process is described in Figure~\ref{fig:Arvola}.

\begin{figure}[ht!]
	\begin{tikzpicture}
	\begin{scope}[xscale=4]
		\draw (0,0) -- (1,3);
		\draw (0,1) -- (1,2);
		\draw (0,2) -- (1,1);
		\draw (0,3) -- (1,0);
		\node[left] at (0,0) {$\omega_\ell$};
		\node[left] at (0,1) {$\omega_{\ell-1}$};
		\node[left] at (-0.05,1.55) {$\vdots$};
		\node[left] at (0,2) {$\omega_2$};
		\node[left] at (0,3) {$\omega_1$};
		\node[right] at (1,0) {$\omega_1$};
		\node[right] at (1,1) {$\omega_1*\omega_2$};
		\node[right] at (1.1,1.55) {$\vdots$};
		\node[right] at (1,2.1) {$(\omega_1\cdots\omega_{\ell-2})*\omega_{\ell-1} \equiv \omega_{\ell-1}^{\omega_\ell}$};
		\node[right] at (1,3) {$(\omega_1\cdots \omega_{\ell-1})*\omega_\ell\equiv \omega_\ell $};
		\node at (-1,0) {};
	\end{scope}
\end{tikzpicture}
	\caption{Assignment of meridians at each singularity\label{fig:Arvola}.}
\end{figure}
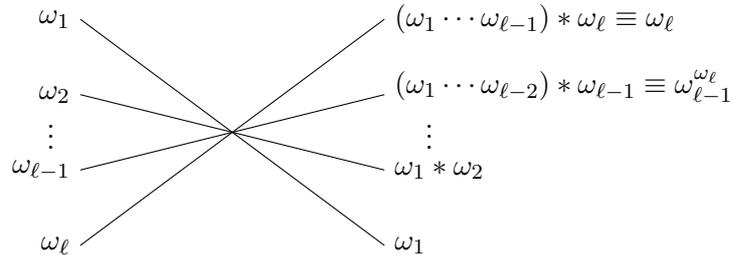

Finally, to each singular point $P\in\Sing(\A^\RR)$ with input elements $\omega_1,\ldots,\omega_\ell$ associated to the left-hand segments (as in Figure~\ref{fig:Arvola}), we assign the set of relations
\[
	R_P=\{\omega_1\cdots\omega_\ell=\omega_{\sigma(1)}\cdots \omega_{\sigma(\ell)} \mid \sigma\ \text{a cyclic permutation of $\ell$ elements}\}
\]

\begin{rmk}\label{rmk:vertical1}
This method works even if there are vertical lines; by a small rotation, e.g. counterclockwise, one can assume that
the vertical line is the first one. How it works for double or triple points is shown in
Figure~\ref{fig:vertical}.
\begin{figure}[ht!]
	\begin{tikzpicture}
	\begin{scope}[xscale=1.5,yscale=.5]
		\draw (0,1.5) -- (1,1.5);
		\draw (0.5,3) -- (0.5,0);
		\node[below] at (0.5,0) {$\omega_1$};
		\node[above] at (0.5,3) {$\omega_{1}$};
		\node[left] at (-0.05,1.55) {$\omega_2$};
		\node[right] at (1.1,1.55) {$\omega_2$};
	\end{scope}

	\begin{scope}[xscale=2,yscale=.5,xshift=3cm]
		\draw (0,0) -- (1,3);
		\draw (0.5,3) -- (.5,0);
		\draw (0,3) -- (1,0);
		\node[left] at (0,0) {$\omega_3$};
		\node[above] at (0.5,3) {$\omega_1$};
		\node[left] at (0,3) {$\omega_2$};
		\node[below] at (0.5,0) {$\omega_1$};
		\node[right] at (1,0) {$\omega_1*\omega_2\equiv\omega_2^{\omega_3}$};
		\node[right] at (1,3) {$(\omega_1\omega_2)*\omega_3\equiv \omega_3 $};
		\node at (-4.5,0) {};
	\end{scope}
\end{tikzpicture}
	\caption{Assignments of meridians with vertical lines\label{fig:vertical}.}
\end{figure}
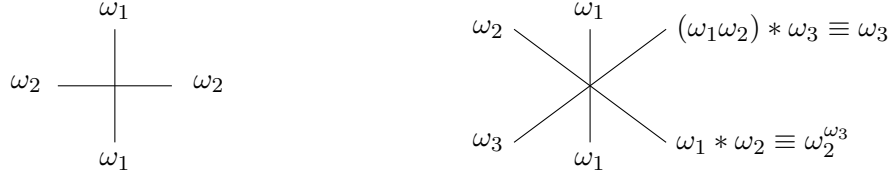
\end{rmk}

\begin{thm}[Randell~\cite{Randell}]\label{thm:arvola}
	The fundamental group of the complement of $\A$ admits the following presentation:
	\begin{equation*}
		G_\A\simeq\left\langle\ \m_1,\dots,\m_n\ \left|\ \bigcup_{P\in\Sing(\A^\RR)} R_P\ \right.\right\rangle.
	\end{equation*}
\end{thm}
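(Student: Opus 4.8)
The plan is to derive the presentation by the Zariski--Van Kampen method~\cite{zariski,vanKampen} applied to the pencil of vertical lines. Since $L_0$ is the line at infinity, $M(\A)=\CC^2\setminus\bigcup_{i=1}^n L_i$, and one considers the projection $p\colon\CC^2\to\CC$, $(x,y)\mapsto x$. By a generic choice of coordinates---a small rotation, as in Remark~\ref{rmk:vertical1}---we may assume that no line of $\A$ is vertical and that the points of $\Sing(\A)$ have pairwise distinct real $x$-coordinates; set $\Sigma=\{p(P)\mid P\in\Sing(\A^\RR)\}$. Then every fibre of $p$ meets each $L_i$ transversally in one point, the generic fibre is $\CC$ with $n$ punctures, its fundamental group is the free group $F_n$ on the meridians $\m_1,\dots,\m_n$, and $p$ restricts to a locally trivial fibration over $\CC\setminus\Sigma$. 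Fixing a base fibre $x=\beta$ with $\beta$ to the left of $\Sigma$, one obtains the braid monodromy $\rho\colon\pi_1(\CC\setminus\Sigma,\beta)\to B_n$, which we let act on $F_n=\langle\m_1,\dots,\m_n\rangle$ through the Artin representation; Van Kampen's theorem then gives $G_\A\simeq F_n/N$, where $N$ is the normal closure of $\{\,\m_i\inv\,\rho(\delta_P)(\m_i)\ :\ 1\le i\le n,\ P\in\Sing(\A^\RR)\,\}$ and $\{\delta_P\}_P$ is the standard free basis of $\pi_1(\CC\setminus\Sigma,\beta)$ formed by counterclockwise loops around the points of $\Sigma$, taken from left to right.

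The core computation is that of $\rho(\delta_P)$ for a multiple point $P=P_{i_1,\dots,i_\ell}$ of multiplicity $\ell$. As $\A$ is real, while $x$ runs along the real axis from $\beta$ towards $p(P)$ the $n$ marked points of the moving fibre stay real and merely slide past one another over the $x$-coordinates of the intermediate singular points; recording these crossings transports the meridian of each $L_j$ to the conjugate of $\m_j$ prescribed by the left-to-right rule of Figure~\ref{fig:Arvola} (and of Figure~\ref{fig:vertical} in the presence of a vertical line). Hence, once the base point has been carried to a fibre immediately to the left of $p(P)$, the $\ell$ strands meeting at $P$ carry exactly the words $\omega_1,\dots,\omega_\ell$ displayed there. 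Encircling $p(P)$ then performs a \emph{positive full twist} on precisely these $\ell$ strands, fixing all the others; so in this conjugated frame $\rho(\delta_P)$ conjugates each of $\omega_1,\dots,\omega_\ell$ by the product $c_P:=\omega_1\cdots\omega_\ell$ and fixes every other generator.

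It then remains to extract the relations. The contribution of $\delta_P$ to the Van Kampen presentation is the set $\{\,c_P\,\omega_j\,c_P\inv=\omega_j\ :\ 1\le j\le\ell\,\}$, i.e.\ the assertion that $c_P$ commutes with each of $\omega_1,\dots,\omega_\ell$. An elementary manipulation---cancel $\omega_1$ on the left in $c_P\omega_1=\omega_1 c_P$ to get $\omega_1\cdots\omega_\ell=\omega_2\cdots\omega_\ell\omega_1$, and iterate---shows this is equivalent to the family $R_P=\{\omega_1\cdots\omega_\ell=\omega_{\sigma(1)}\cdots\omega_{\sigma(\ell)}\mid\sigma\text{ a cyclic permutation of }\ell\text{ elements}\}$. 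Taking the union over all $P\in\Sing(\A^\RR)$ gives exactly the asserted presentation. No further relations are needed, since Van Kampen's theorem yields all of them; and, working in the affine chart $\PC^2\setminus L_0\cong\CC^2$, neither the meridian of $L_0$ nor any relation at infinity enters.

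The main obstacle lies in the bookkeeping of the second step. One must show rigorously that combing the braid monodromy along the real axis reproduces precisely the conjugating words $\omega_1*\omega_2,\ \dots,\ (\omega_1\cdots\omega_{\ell-2})*\omega_{\ell-1},\ \dots$ of Figure~\ref{fig:Arvola}, and that the local braid around each $p(P)$ is the \emph{positive} full twist---not its inverse, nor a half twist---which requires fixing and then reconciling the orientation conventions for the meridians, for the loops $\delta_P$, and for the vertical fibres. One must also verify that the device of Remark~\ref{rmk:vertical1}, turning a vertical line into the first one by a small rotation, leaves the resulting relations unchanged. These verifications are exactly the analyses of Randell~\cite{Randell} and Arvola~\cite{Arvola} (see also~\cite{Salvetti1}), to which we refer for the details.
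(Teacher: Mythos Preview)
The paper does not give its own proof of this theorem; it is stated as a result of Randell with a bare citation to~\cite{Randell}, and the text moves on immediately to describe the alternative Zariski--Van Kampen/braid monodromy presentation. So there is nothing in the paper to compare your argument against, beyond noting that your sketch is consistent with the surrounding discussion: the paper records that each local braid factors as a conjugate of a full twist $\Delta^2_{n_j,r_j}$ and that the resulting presentation is equivalent to Randell's by Tietze-I moves (citing Cohen--Suciu~\cite{CohenSuciu}), which is exactly the mechanism you are spelling out.

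Your outline is the standard derivation and is correct as far as it goes. The identification of the local monodromy with a positive full twist, the action $\omega_j\mapsto c_P\,\omega_j\,c_P^{-1}$ with $c_P=\omega_1\cdots\omega_\ell$, and the reduction of ``$c_P$ commutes with each $\omega_j$'' to the cyclic relations $R_P$ are all right. The one genuine assumption you add beyond the paper's setup is that the singular points have pairwise distinct real $x$-coordinates; Randell's algorithm does not require this, but your appeal to a generic small rotation makes it harmless for the purpose of establishing the presentation. As you yourself acknowledge in the last paragraph, the delicate part---tracking that the left-to-right combing produces precisely the conjugates of Figure~\ref{fig:Arvola} with the correct signs and orientations---is not carried out here but deferred to~\cite{Randell,Arvola,Salvetti1}; that is also where the paper leaves it.
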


Under the same hypothesis as above, another method to obtain a finite presentation for $G_\A$ is obtained by Zariski-Van Kampen~\cite{zariski,vanKampen} using the braid monodromy. Consider the affine arrangement $\A^\text{aff}=\A\cap\CC^2$ and a generic linear projection with respect to the affine arrangement $\pi: \CC^2\to\CC$ and the finite set $\pi\left(\Sing(A^\text{aff})\right)=\{p_1,\ldots,p_s\}$ ordered by their real part in $\CC$. Let~$\BB_n$ be the \emph{full braid group} given by the standard Artin presentation:
\[
	\BB_n=\left\langle\sigma_1,\ldots,\sigma_{n-1} \left|\ \begin{array}{c}
		\left[\sigma_i,\sigma_j\right]=1,\ |i-j|\geq 2,\\ \sigma_i\sigma_{i+1}\sigma_i=\sigma_{i+1}\sigma_i\sigma_{i+1},\ i=1,\ldots,n-2
	\end{array}\right.\right\rangle,
\]
and consider the \emph{pure braid group of $n$ strings}~$\PP_n$ as a subgroup of $\BB_n$ given by the short exact sequence
\[
	1\to\PP_n\to\BB_n\to\Sigma_n\to 1
\]
where $\BB_n\to\Sigma_n$ assigns to any braid the corresponding permutation in the $n$-symmetric group $\Sigma_n$.

We associate to each $p_i$ a meridian $\gamma_i\in\pi_1(\CC\setminus\{p_1,\ldots,p_s\})\simeq\FF_s$ and a pure braid $\alpha_i\in\PP_n$ from the braided fiber $\pi^{-1}(\gamma_i)$, constructing a basis $\{\gamma_1,\ldots, \gamma_s\}$ of $\FF_s$ and a well-ordered tuple of pure braids $(\alpha_1,\ldots, \alpha_s)$. The \emph{braid monodromy} is the morphism $\rho:\pi_1(\CC^2\setminus\{p_1,\ldots,p_s\})\to\PP_n$ defined by the construction above, and it is well-defined up to conjugations in $\PP_n$. 
Seen as tuples of pure braids, two braid monodromies are said to be \emph{equivalent} if they are equivalent by conjugation in $\PP_n$ and by \emph{Hurwitz moves}, see~\cite{acc:01a}. Using the previous construction and the natural action of $\BB_n$ on the free group $\FF_n=\left\langle x_1,\ldots,x_n\right\rangle$ given by
\[
	x_i^{\sigma_j}=\left\lbrace
	\begin{array}{ll}
		x_{i}x_{i+1}x_{i}^{-1} & , i=j\\
		x_{i-1} & , i=j+1\\
		x_i & , \text{otherwise}
	\end{array}
	\right..
\]
Finally, from any equivalent braid monodromy, we obtain:
\begin{equation*}
	G_\A\simeq\left\langle\ x_1,\ldots,x_n\ \mid x_i^{\alpha_j}=x_i,\ \text{for } i=1,\ldots,n-1\ \text{and }  j=1,\ldots,s\right\rangle.
\end{equation*}
We can reduce this presentation. In fact, these braids can be decomposed as
$\alpha_j=\beta_j^{-1}\Delta^2_{n_j,r_j}\beta_j$, where $\beta_j$ is a braid and $\Delta^2_{n_j,r_j}$
is the full-twist involving $r_j$ strands starting from a suitable $n_j$ ($r_j$ is the multiplicity of
the point associated to $p_j$). Then,
\begin{equation*}
	G_\A\simeq\left\langle\ x_1,\ldots,x_n\ \mid \left(x_i^{\Delta^2_{n_j,r_j}}\right)^{\beta_j}=x_i^{\beta_j},\ \text{for } i=n_j,\ldots,n_j+r_j-2\ \text{and }  j=1,\ldots,s\right\rangle.
\end{equation*}
\begin{rmk}
	The above presentations of $G_\A$ are equivalent by Tietze-I moves as it is proved by Cohen-Suciu~\cite{CohenSuciu}. They have the same number of generators and relators, and the relations are composed, in both presentations, of cyclic commutators involving conjugates of the generators. Thus, we choose the Randell's presentation in order to compute the fundamental group of the complement of our example in a \texttt{GAP} code given in Appendix~\ref{sec:appendix}.
\end{rmk}
\begin{rmk}
	A modified way to obtain the braid monodromy and its associated presentation of $G_\A$ for an affine arrangement with vertical lines can be done using Remark~\ref{rmk:vertical1}, see also~\cite{CohenSuciu}.
\end{rmk}

\subsection{Lower central series quotients}
\mbox{}

The \emph{lower central series} (LCS) of a group $G$ is defined as a descending sequence of normal subgroups
\begin{equation*}
	G = \gamma_1(G) \trianglerighteq \gamma_2(G) \trianglerighteq \cdots \trianglerighteq \gamma_k(G) \trianglerighteq \cdots,
\end{equation*}
such that each $\gamma_{k+1}(G)$ is the commutator subgroup $[\gamma_k(G),G]$ of $G$. The \emph{$k^\textsc{th}$ lower central quotient} of this series is defined as the group $\gr_k(G)=\gamma_k(G)/\gamma_{k+1}(G)$. Note that $\gr_k(G)$ is abelian since $[\gamma_k(G),\gamma_k(G)]\leq \gamma_{k+1}(G)$. 
In addition, if $G$ is finitely generated, also 
$\gr_k(G)$ 
is finitely generated. 
Given $x_1,\dots,x_k\in G$, we denote inductively $[x_1]=x_1$ and
\[
[x_1,\dots,x_k]=[[x_1,,\dots,x_{k-1}],x_k]\in\gamma_k(G).
\]
It is not difficult to prove that the class of $[x_1,\dots,x_k]$ in $\gr_k(G)$ is determined
by the conjugacy class of $x_1,\dots,x_k$ and that the natural action of $G$ on $\gr_k(G)$
(induced by conjugation) is trivial. These facts imply that if $G$ is finitely
generated, this is the case for $\gr_k(G)$, too. Thus, $\gr_k(G)$ is determined by its rank, noted $\phi_k(G)$, and its torsion.

\begin{rmk}
  If $G$ is a finitely presented group, then the GAP package \texttt{nq} can be used to compute the lower central series quotients of $G$.
\end{rmk}

In the case of a group $G_\A$ associated to a line arrangement $\A$, the possible dependency on the combinatorics of the invariants of the LCS is a classical research topic. %
In~\cite{Rybnikov}, Rybnikov gives a sketch of a proof of the combinatorial determination of the second nilpotent group $G_\A/\gamma_3(G_\A)$, lately formally proved by Matei and Suciu~\cite{MateiSuciu}. %

Concerning the cohomology algebra of $M(\A)$, Orlik and Solomon~\cite{OrlikSolomon} showed that it is determined by $L(\A)$ by constructing a combinatorial algebra which is isomorphic to $H^\bullet(M(\A))$,
previously computed by 
Arnol'd and Brieskorn~\cite{Brieskorn} in some outstanding cases. %
Using Sullivan 1-minimal models, Falk proves that also $\phi_k(G_\A)$ is combinatorially determined~\cite{Falk:minimal}. 
He also obtains with Randell~\cite{FalkRandell:LCS}, in the particular case of fiber-type arrangements, the \emph{LCS formula}:
\begin{equation*}
	\prod\limits_{k\geq 1}(1-t^k)^{\phi_k(G_\A)}=P(M(\A),-t),
\end{equation*}
where $P(M(\A),t)$ is the Poincar\'e polynomial of the complement (see~\cite[Sec.~2]{OrlikTerao92}), which is known to be combinatorial. Nevertheless, there exist examples of arrangements for which the LCS formula fails~\cite{FalkRandell,Suciu}, and no general explicit formula is known for $\phi_k(G_\A)$ (even for $\phi_3(G_\A)$).

\bigskip
\section{An explicit example of 13 lines}\label{sec:ZP}
	Let $\L$ be a lattice describing a particular combinatorics of a line arrangement with $n$ lines. We define the 
\emph{moduli space of ordered realizations} of $\L$, noted $\Sigma_\L^{\text{ord}}$, as the set of all line arrangements (expressed as the collection of the associated linear forms) whose combinatorics are lattice-isomorphic to $\L$, i.e.
\[
	\Sigma_\L^{\text{ord}}=\{\A\in (\PCc^2)^n \mid \L(\A)\sim\L\}/\PGL_3(\CC).
\]
It is worth noticing that $\Sigma_\L^{\text{ord}}$ is a constructible set in $(\PC^2)^n$ under the action of $\PGL_3(\CC)$. The \emph{moduli space of realizations} is defined as $\Sigma_\L=\Sigma_\L^{\text{ord}}/\Aut(\L)$, where $\Aut(\L)$ is the automorphism group of the combinatorics $\L$. The study of the topology of moduli spaces is fundamental in order to obtain Zariski pairs, since Randell~\cite{Randell:lattice} proved that two 
line arrangements lying in the same path-component of $\Sigma_\L$ have isomorphic embedded topology.

\subsection{Definition of the example}
\mbox{}

In~\cite{GueViu:config}, several examples of real complexified Zariski pairs are given. In particular, the modulli space of realizations $\Sigma$ of one of them is studied in~\cite[Sec.~4]{GueViu:config}. This particular pair possesses 13 lines, 11 points of multiplicity 3, 2 of multiplicity 5 and the following combinatorics:
\begin{equation*}
	\begin{array}{c}
		\left\{\ 
			\big\{L_1,L_4,L_6,L_9,L_{13}\big\},\ \big\{L_1,L_5,L_7\big\},\ \big\{L_1,L_8,L_{10}\big\},\ \big\{L_1,L_{11},L_{12}\big\},  
			\right. \\[0.5em]
			\quad \big\{L_2,L_4,L_7,L_{10},L_{12}\big\},\ \big\{L_2,L_5,L_6\big\},\ \big\{L_2,L_8,L_9\big\},\ \big\{L_2,L_{11},L_{13}\big\},  \\[0.5em]
			\left. \qquad
			\big\{L_3,L_4,L_5\big\},\ \big\{L_3,L_6,L_8\big\},\ \big\{L_3,L_7,L_{11}\big\},\ \big\{L_3,L_9,L_{10}\big\},\ \big\{L_3,L_{12},L_{13}\big\}\
		\right\},
	\ \end{array}
\end{equation*}
where each subset describes an incidence relation between lines; pairs which do not appear
correspond to double points. It is proven in~\cite[Prop.~4.5]{GueViu:config} that $\Sigma$ decomposes as topological space on two connected components $\Sigma=\Sigma_0\sqcup\Sigma_1$, each of them being isomorphic to a punctured complex affine line, where any pair of arrangements $\A\in\Sigma_0$ and $\A'\in\Sigma_1$ forms a Zariski pair. Furthermore, the arrangements lying on each connected component can be differentiated by particular geometrical properties about the position of their singular points and lines with respect to curves of low degree.

\begin{propo}[{\cite[Thm.~3.5,~Prop.~4.6]{GueViu:config}}]
	The moduli space $\Sigma$ is composed of two connected components $\Sigma_0$ and $\Sigma_1$. Moreover, for any $\A\in\Sigma$, the following are equivalent:
	\begin{enumerate}
		\item $\A\in\Sigma_0$.
		\item The six lines $L_6, L_7, L_8, L_{10}, L_{11}, L_{13}$ are tangent to a smooth conic.
		\item The six triple points $P_{1,8,10}$, $P_{1,11,12}$, $P_{2,8,9}$, $P_{2,11,13}$, $P_{3,9,10}$ and $P_{3,12,13}$ are contained in a smooth conic.
		\item The three triple points $P_{1,11,12}$, $P_{2,8,9}$ and $P_{3,4,5}$ are aligned.
		\item For any $\A'\in\Sigma_1$, we have $(\PC^2,\A)\not\simeq(\PC^2,\A')$.
	\end{enumerate}
\end{propo}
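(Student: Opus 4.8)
\textbf{Strategy, and a normal form.} The plan is to describe $\Sigma$ by explicit equations, to read off the equivalences $(1)\Leftrightarrow(2)\Leftrightarrow(3)\Leftrightarrow(4)$ from polynomial identities along a one-parameter family, and to deduce $(1)\Leftrightarrow(5)$ from Randell's lattice-isotopy theorem together with a single genuinely topological computation. The combinatorics has exactly two points of multiplicity $5$, namely $Q_1=P_{1,4,6,9,13}$ and $Q_2=P_{2,4,7,10,12}$, and $L_4$ is the only line joining them. Using the $\PGL_3(\CC)$-action one may normalise a projective frame so that $Q_1$, $Q_2$ and the line $L_4=\overline{Q_1Q_2}$ are fixed and no continuous gauge freedom remains. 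Then $L_1,L_6,L_9,L_{13}$ run through $Q_1$ and $L_2,L_7,L_{10},L_{12}$ run through $Q_2$, each recorded by a single slope, while $L_3,L_5,L_8,L_{11}$ avoid both quintuple points. Each of the eleven triple-point incidences of the list is one polynomial equation --- the vanishing of the $3\times 3$ determinant of the coefficient vectors of the three defining linear forms --- and the count $26-8-6-11=1$ predicts $\dim\Sigma=1$.

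\textbf{Solving the incidence system.} Ordering the incidences suitably, each successive triple point pins down one of the remaining lines or one of the remaining slopes rationally in terms of the others, so the system can be solved by elimination. A binary choice is forced at one step (a quadratic relation, equivalently two admissible ways of placing a free line so as to realise the required incidences), and this is exactly where the reducibility comes from: what remains is a plane curve with two rational components, which are the $\Sigma_0$ and $\Sigma_1$ of the statement. Discarding the finitely many parameter values for which two lines coincide or an unwanted collinearity appears --- either of which would enlarge the lattice --- each component becomes a once-punctured complex affine line, which recovers \cite[Thm.~3.5]{GueViu:config}. Fix the labelling so that $(1)$ means that $\A$ lies on the branch $\Sigma_0$.

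\textbf{The geometric conditions.} Each of $(2),(3),(4)$ is a closed algebraic condition on the branch parameter, so on each irreducible component it holds either identically or only at finitely many points; hence it is enough to test it at one generic value by direct substitution. Condition $(4)$ is the vanishing of the $3\times 3$ determinant built from the coordinate vectors of $P_{1,11,12}$, $P_{2,8,9}$ and $P_{3,4,5}$; condition $(3)$ is obtained by writing down the unique conic through five of the six listed triple points and imposing that the sixth lie on it; condition $(2)$ is its dual, namely that the dual points of the six lines $L_6,L_7,L_8,L_{10},L_{11},L_{13}$ lie on a conic. Evaluating the three polynomials so obtained along $\Sigma_0$ and along $\Sigma_1$, one checks that each vanishes identically on $\Sigma_0$ and has no admissible zero on $\Sigma_1$, which gives $(1)\Leftrightarrow(2)\Leftrightarrow(3)\Leftrightarrow(4)$.

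\textbf{The topological equivalence, and the main obstacle.} The implication $(5)\Rightarrow(1)$ is formal: if $\A\notin\Sigma_0$, then $\A\in\Sigma_1$, and taking $\A'=\A$ violates $(5)$. For $(1)\Rightarrow(5)$, note that $\Sigma_0$ and $\Sigma_1$ are each path-connected, so by Randell's theorem (recalled above) the homeomorphism type of $(\PC^2,\A)$ is constant along each branch; it therefore suffices to prove that one representative of $\Sigma_0$ is not homeomorphic to one representative of $\Sigma_1$. The hard part will be exactly this last step, and it is the only point at which a combinatorial or purely algebraic argument does not suffice, since the two arrangements share the same intersection lattice --- hence the same ranks $\phi_k$ and the same Orlik--Solomon algebra. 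One must produce a finer topological invariant separating them: for instance the braid monodromy read off from the two real pictures and shown to be inequivalent, or an appropriate linking-type invariant of the embedding. The geometric dichotomy obtained above is precisely what makes the two real pictures genuinely different and guides the choice of distinguishing invariant.
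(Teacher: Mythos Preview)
The proposition carries no proof in this paper: it is quoted verbatim from \cite[Thm.~3.5,~Prop.~4.6]{GueViu:config}, so there is no ``paper's own proof'' to compare your sketch against. Your outline of the moduli computation (normalising a frame using the two quintuple points, solving the incidence system by elimination, obtaining a quadratic branching and two rational components, and testing the closed conditions $(2)$--$(4)$ generically on each branch) is a faithful description of the standard strategy and is consistent with what is stated about $\Sigma$ in \cite{GueViu:config}; the dimension count $26-8-6-11=1$ is correct.

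The one place where your sketch and the actual literature diverge is the topological step $(1)\Rightarrow(5)$. You leave this open and suggest braid monodromy or a linking invariant. In \cite{GueViu:config} the pairs are separated by a linking/configuration-type invariant (this is the content of their Thm.~3.5), whereas the present paper gives an \emph{independent} proof of $(1)\Rightarrow(5)$ a posteriori, via Theorem~\ref{thm:main}: the $2$-torsion in $\gr_4(G_{\A^+})$ versus its absence in $\gr_4(G_{\A^-})$ already forces $(\PC^2,\A^+)\not\simeq(\PC^2,\A^-)$, and Randell's lattice-isotopy theorem propagates this along each component. So your reduction ``it suffices to separate one representative of each branch by a finer invariant'' is exactly right, but the invariant actually used here is the torsion in the LCS quotients, not braid monodromy.
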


\begin{rmk}
	There exist 3 additional alignments and 11 smooth conics containing 6 singular points (of multiplicity at least 3);
	any of these properties characterizes as well the arrangements belonging to the connected component $\Sigma_0$. 
\end{rmk}


As a consequence of the previous proposition, any pair of arrangements belonging to different connected components forms a Zariski pair. Let consider $\A^+\in\Sigma_0$ and $\A^-\in\Sigma_1$ defined by the following equations:
\begin{gather*}
		L_1: x=0,\quad
		L_2: y=0,\quad
		L_3: x+y-z=0,\quad
		L_4: z=0,
\\
L_5: 3x+3y+z=0,\quad
		L_6: 3x+z=0,\quad
		L_7: 3y+z=0,
\\
L_8: 2x-y+2 z=0,\quad
		L_9: x+z=0,\quad
		L_{10}: y-2z=0,
\\
L_{11}: (-1\pm 1/2 )x+(-1\pm2)y+z=0,
\\
L_{12}: (-1\pm2)y+z=0,\quad
		L_{13}: (-2\pm1)x+2z=0.
 \end{gather*}
Considering $L_4$ as the line at infinity, the affine pictures of $\A^+$ and $\A^-$ are given in Figure~\ref{fig:ZP-Ap} and Figure~\ref{fig:ZP-Am}, respectively.

\subsection{Main results}
\mbox{}

Consider $G_{\A^+}$ and $G_{\A^-}$ being  the fundamental groups of the complements $M(\A^+)$ and $M(\A^-)$, respectively. Using the algorithm given in Section~\ref{sec:funda} and Figures~\ref{fig:ZP-Ap} and~\ref{fig:ZP-Am}, we can compute that in $G_{\A^+}$ and $G_{\A^-}$, the torsion of the $4^\textsc{th}$ and $5^\textsc{th}$ LCS quotients differs. 

%

\begin{thm}\label{thm:main}
	Let $\phi_k$ be the rank $\phi_k(G_{\A^+})=\phi_k(G_{\A^-})$, for any $k>0$. We have:
	\begin{enumerate}
	\item For $k\leq3$, $\gr_k(G_{\A^+})\simeq\gr_k(G_{\A^-})$ are free abelian groups of rank $\phi_k$.
	\item The groups $\gr_4(G_{\A^+})$ and $\gr_5(G_{\A^+})$ contain a non-trivial 2-torsion element, while $\gr_4(G_{\A^-})$ and $\gr_5(G_{\A^-})$ are abelian torsion-free of ranks $\phi_4$ and $\phi_5$ respectively.
	\end{enumerate}	 
\end{thm}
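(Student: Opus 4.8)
The plan is to turn Theorem~\ref{thm:main} into an explicit, checkable computation carried out on the two Randell presentations of $G_{\A^+}$ and $G_{\A^-}$. First I would take $L_4$ as the line at infinity and work with the real pictures $\A^{+,\RR}$ and $\A^{-,\RR}$ of Figures~\ref{fig:ZP-Ap} and~\ref{fig:ZP-Am}. Sweeping each picture from left to right and applying the meridian-assignment rule of Figure~\ref{fig:Arvola} at every singular point — using Remark~\ref{rmk:vertical1} and Figure~\ref{fig:vertical} to handle the vertical line $L_1\colon x=0$ — produces, by Theorem~\ref{thm:arvola}, a finite presentation $G_{\A^\pm}\simeq\langle \m_1,\dots,\m_{13}\mid \bigcup_{P\in\Sing(\A^{\pm,\RR})}R_P\rangle$. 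Since $\A^+$ and $\A^-$ share their intersection lattice, the two presentations have the same generators and the same list of relators up to the conjugating words occurring inside them; the difference between these words is exactly what distinguishes the two connected components $\Sigma_0$ and $\Sigma_1$ of the moduli space.

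For part~(1) I would argue as follows. By Falk's theorem~\cite{Falk:minimal} the ranks $\phi_k(G_\A)$ of the lower central series quotients depend only on the combinatorics, so $\phi_k(G_{\A^+})=\phi_k(G_{\A^-})=:\phi_k$ for all $k$. Since two finitely generated abelian groups are isomorphic as soon as they have the same rank and the same torsion, for $k\le 3$ it suffices to know that $\gr_k(G_{\A^\pm})$ is torsion-free. In degree $1$ this is clear, $\gr_1(G_{\A^\pm})=H_1(M(\A^\pm);\ZZ)\cong\ZZ^{12}$; in degree $2$ it holds for every arrangement group, as $\gr_2(G_\A)$ is the degree-$2$ component of the (combinatorial, torsion-free) holonomy Lie algebra of $G_\A$; in degree $3$ I would confirm torsion-freeness directly by running the \texttt{GAP} package \texttt{nq}~\cite{nq} on both presentations up to class $3$ and reading off free abelian groups of rank $\phi_3$, consistently with the combinatorial control on the third nilpotent quotient (cf.\ \cite{MateiSuciu}).

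For part~(2) the decisive step is to push exactly the same \texttt{nq} computation on both presentations up to class $5$, obtaining polycyclic presentations of $G_{\A^\pm}/\gamma_{k+1}(G_{\A^\pm})$ for $k\le 5$ and hence the abelian-group structure of each $\gr_k$. One then reads from the output that $\gr_4(G_{\A^+})$ and $\gr_5(G_{\A^+})$ have torsion subgroup $\ZZ/2$, with the remaining summand free abelian of rank $\phi_4$, respectively $\phi_5$, whereas $\gr_4(G_{\A^-})\cong\ZZ^{\phi_4}$ and $\gr_5(G_{\A^-})\cong\ZZ^{\phi_5}$ are torsion-free. Since the isomorphism type of $\gr_k(G)$ is an invariant of $G$, this proves Theorem~\ref{thm:main} and in particular $G_{\A^+}\not\cong G_{\A^-}$ (Corollary~\ref{cor:fondagrp}). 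To make the statement self-contained rather than a machine assertion, I would record the precise input presentations and the \texttt{nq} session in Appendix~\ref{sec:appendix}, and, if feasible, exhibit an explicit iterated commutator $[\m_{i_1},\dots,\m_{i_4}]\in\gamma_4(G_{\A^+})$ whose square lies in $\gamma_5(G_{\A^+})$ although the element itself does not, thereby certifying the $2$-torsion by hand.

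The main obstacle I anticipate is the reliability and feasibility of the computation. On the reliability side, everything rests on extracting the Randell presentations from the real pictures without error — correctly ordering the singular points along the generic projection and getting the conjugations at the multiple points (and at the vertical line) right — so I would cross-validate by checking that the abelianization and the class-$\le 3$ LCS data returned by \texttt{nq} match the values predicted by the combinatorics via \cite{Falk:minimal} and \cite{MateiSuciu}. On the feasibility side, with $13$ generators and a large number of relators the nilpotent-quotient computation up to class $5$ is heavy; one must ensure \texttt{nq} terminates on both inputs and that the torsion comparison is performed between correspondingly normalized polycyclic presentations, so that the conclusion ``$\ZZ/2$ versus torsion-free'' is genuinely meaningful.
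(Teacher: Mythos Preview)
Your plan is essentially the paper's own proof: obtain the Randell presentations from the real pictures (handling vertical lines as in Remark~\ref{rmk:vertical1}), feed them to \texttt{nq} in \texttt{GAP}, and read off the abelian invariants of $\gr_k$ for $k\le 5$; the paper in fact reports the explicit values $(\phi_k)_{k=1}^5=(12,23,76,211,660)$ and the single $\ZZ_2$ summand in $\gr_4(G_{\A^+})$ and $\gr_5(G_{\A^+})$, and records the witness $[[[\m_1,\m_5],\m_2],\m_3]$ for the $2$-torsion, exactly as you propose. Two small bookkeeping corrections to make before implementing: with $L_4$ at infinity the presentation has $12$ generators (indexed by $\{1,\dots,13\}\setminus\{4\}$), not $13$; and in these pictures $L_1,L_6,L_9,L_{13}$ are all vertical, so the vertical-line rule must be applied at several points, not just along $L_1$.
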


\begin{proof}
  Using the \texttt{GAP} code described in Appendix~\ref{sec:appendix}, we obtain the following primary decompositions of the lower central series quotients:
	\begin{enumerate}
	\item For $k\leq3$, we have $\gr_k(G_{\A^+})\simeq\gr_k(G_{\A^-})\simeq\ZZ^{\phi_k}$ with $(\phi_k)_{k=1}^3=(12, 23, 76)$,
	\item $\gr_4(G_{\A^+})\simeq\ZZ^{211}\oplus\ZZ_2$ and $\gr_4(G_{\A^-})\simeq\ZZ^{211}$,
	\item $\gr_5(G_{\A^+})\simeq\ZZ^{660}\oplus\ZZ_2$ and $\gr_5(G_{\A^-})\simeq\ZZ^{660}$.\qedhere
	\end{enumerate}
\end{proof}

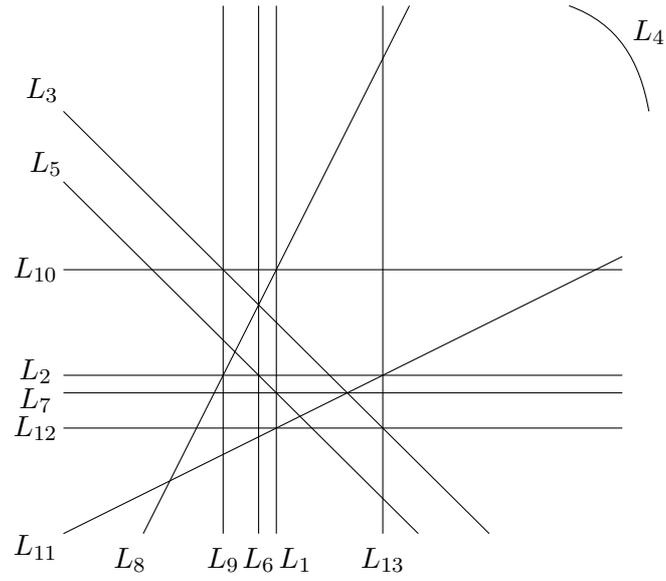
\begin{figure}[ht!]
	\begin{tikzpicture}[scale=0.7]
\def\colorV{black}
\def\colorS{black}
\def\colorSa{black}
\def\colorSb{black}

\coordinate (P1) at (0, -3);
\coordinate (Q1) at (0, 7);
\coordinate (P2) at (-4, 0);
\coordinate (Q2) at (13/2, 0);
\coordinate (P3) at (-4, 5);
\coordinate (Q3) at (4, -3);
\coordinate (P4) at (-4, 11/3);
\coordinate (Q4) at (8/3, -3);
\coordinate (P5) at (-1/3, -3);
\coordinate (Q5) at (-1/3, 7);
\coordinate (P6) at (-4, -1/3);
\coordinate (Q6) at (13/2, -1/3);
\coordinate (P7) at (-5/2, -3);
\coordinate (Q7) at (5/2, 7);
\coordinate (P8) at (-1, -3);
\coordinate (Q8) at (-1, 7);
\coordinate (P9) at (-4, 2);
\coordinate (Q9) at (13/2, 2);
\coordinate (P10) at (-4, -3);
\coordinate (Q10) at (13/2, 9/4);
\coordinate (P11) at (-4, -1);
\coordinate (Q11) at (13/2, -1);
\coordinate (P12) at (2, -3);
\coordinate (Q12) at (2, 7);
\draw[color=\colorV] (P1)--(Q1) node[pos=-.05, xshift=0.25cm] {$L_{1}$};
\draw[color=\colorV] (P2)--(Q2) node[pos=-.05, yshift=0.05cm] {$L_{2}$};
\draw[color=\colorV] (P3)--(Q3) node[pos=-.05] {$L_{3}$};
\draw[color=\colorS] (P4)--(Q4) node[pos=-.05] {$L_{5}$};
\draw[color=\colorS] (P5)--(Q5) node[pos=-.05] {$L_{6}$};
\draw[color=\colorS] (P6)--(Q6) node[pos=-.05, yshift=-0.1cm] {$L_{7}$};
\draw[color=\colorSa] (P7)--(Q7) node[pos=-.05] {$L_{8}$};
\draw[color=\colorSa] (P8)--(Q8) node[pos=-.05] {$L_{9}$};
\draw[color=\colorSa] (P9)--(Q9) node[pos=-.05]
{$L_{10}$};
\draw[color=\colorSb] (P10)--(Q10) node[pos=-.05]
{$L_{11}$};
\draw[color=\colorSb] (P11)--(Q11) node[pos=-.05]
{$L_{12}$};
\draw[color=\colorSb] (P12)--(Q12) node[pos=-.05]
{$L_{13}$};

\draw[color=\colorS] (5.5,7) to[out=-20,in=100] (7,5);
\node[text=\colorS] at (7,6.5) {$L_4$};

\end{tikzpicture}
	\caption{Arrangement $\A^+\in\Sigma_0$\label{fig:ZP-Ap}.}
\end{figure}

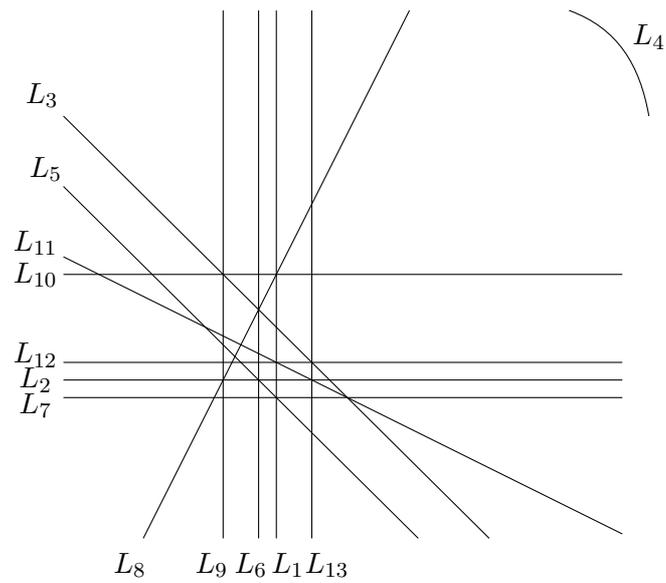
\begin{figure}[ht!]
	\begin{tikzpicture}[scale=0.7]
\def\colorV{black}
\def\colorS{black}
\def\colorSa{black}
\def\colorSb{black}

\coordinate (P1) at (0, -3);
\coordinate (Q1) at (0, 7);
\coordinate (P2) at (-4, 0);
\coordinate (Q2) at (13/2, 0);
\coordinate (P3) at (-4, 5);
\coordinate (Q3) at (4, -3);
\coordinate (P4) at (-4, 11/3);
\coordinate (Q4) at (8/3, -3);
\coordinate (P5) at (-1/3, -3);
\coordinate (Q5) at (-1/3, 7);
\coordinate (P6) at (-4, -1/3);
\coordinate (Q6) at (13/2, -1/3);
\coordinate (P7) at (-5/2, -3);
\coordinate (Q7) at (5/2, 7);
\coordinate (P8) at (-1, -3);
\coordinate (Q8) at (-1, 7);
\coordinate (P9) at (-4, 2);
\coordinate (Q9) at (13/2, 2);
\coordinate (P10) at (-4, 7/3);
\coordinate (Q10) at (13/2, -35/12);
\coordinate (P11) at (-4, 1/3);
\coordinate (Q11) at (13/2, 1/3);
\coordinate (P12) at (2/3, -3);
\coordinate (Q12) at (2/3, 7);
\draw[color=\colorV] (P1)--(Q1) node[pos=-.05, xshift=0.15cm] {$L_{1}$};
\draw[color=\colorV] (P2)--(Q2) node[pos=-.05] {$L_{2}$};
\draw[color=\colorV] (P3)--(Q3) node[pos=-.05] {$L_{3}$};
\draw[color=\colorS] (P4)--(Q4) node[pos=-.05] {$L_{5}$};
\draw[color=\colorS] (P5)--(Q5) node[pos=-.05, xshift=-0.1cm] {$L_{6}$};
\draw[color=\colorS] (P6)--(Q6) node[pos=-.05, yshift=-0.1cm] {$L_{7}$};
\draw[color=\colorSa] (P7)--(Q7) node[pos=-.05] {$L_{8}$};
\draw[color=\colorSa] (P8)--(Q8) node[pos=-.05, xshift=-0.15cm] {$L_{9}$};
\draw[color=\colorSa] (P9)--(Q9) node[pos=-.05]
{$L_{10}$};
\draw[color=\colorSb] (P10)--(Q10) node[pos=-.05]
{$L_{11}$};
\draw[color=\colorSb] (P11)--(Q11) node[pos=-.05, yshift=0.1cm]
{$L_{12}$};
\draw[color=\colorSb] (P12)--(Q12) node[pos=-.05, xshift=0.2cm]
{$L_{13}$};

\draw[color=\colorS] (5.5,7) to[out=-20,in=100] (7,5);
\node[text=\colorS] at (7,6.5) {$L_4$};
\end{tikzpicture}
	\caption{Arrangement $\A^-\in\Sigma_1$\label{fig:ZP-Am}.}
\end{figure}

\begin{rmk}
	We can show that the commutator of meridians $[[[\m_1,\m_5],\m_2], \m_3]$ is a representative of the 2-torsion element in $\gr_4(G_{\A^+})$ (using \texttt{GAP}), while it corresponds to the identity in $\gr_4(G_{\A^-})$.
\end{rmk}

\begin{cor}\label{cor:torsion}
	The torsion in quotients of the LCS of $G_\A$ is not determined by the intersection lattice $\L(\A)$.
\end{cor}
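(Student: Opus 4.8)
This is an immediate consequence of Theorem~\ref{thm:main}, so the plan is short. First I would recall that the two arrangements $\A^+$ and $\A^-$ introduced above realize \emph{the same} combinatorics: by construction $\A^+\in\Sigma_0$ and $\A^-\in\Sigma_1$, and $\Sigma_0,\Sigma_1$ are the two connected components of the moduli space $\Sigma=\Sigma_\L$ of realizations of the fixed $13$-line lattice $\L$ displayed in this section. Hence $\L(\A^+)\sim\L\sim\L(\A^-)$. Consequently, any invariant of an arrangement $\A$ that is a function of $\L(\A)$ alone must take equal values on $\A^+$ and on $\A^-$.

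Second, I would apply Theorem~\ref{thm:main}(2): $\gr_4(G_{\A^+})\simeq\ZZ^{211}\oplus\ZZ_2$ carries a non-trivial $2$-torsion element, while $\gr_4(G_{\A^-})\simeq\ZZ^{211}$ is abelian torsion-free (and likewise at level $5$). Therefore the torsion subgroup of $\gr_k(G_\A)$ is not the same for $\A^+$ and $\A^-$ at $k=4,5$, even though $\L(\A^+)\sim\L(\A^-)$. This is precisely the assertion that the torsion in the LCS quotients of $G_\A$ is not determined by $\L(\A)$, so the corollary follows.

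The only nontrivial ingredient is Theorem~\ref{thm:main} itself, whose proof rests on computing the primary decompositions of $\gr_k(G_{\A^\pm})$ from the Randell presentations of Section~\ref{sec:funda}; this is the step that requires the \texttt{GAP} computation (mostly the \texttt{nq} package) detailed in Appendix~\ref{sec:appendix}, and it is the only place where a genuine obstacle — of a computational rather than a conceptual nature — is met. Beyond that, the deduction above is purely formal. As a remark, the same pair $\A^\pm$ also settles Problem~\ref{pb:Suciu}, and through it Problem~\ref{pb:FR}; but for this corollary only the combinatorial coincidence of $\A^+$ and $\A^-$ together with Theorem~\ref{thm:main}(2) is used.
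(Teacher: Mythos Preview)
Your argument is correct and matches the paper's approach: the corollary is stated without proof because it follows immediately from Theorem~\ref{thm:main}(2) together with the fact that $\A^+$ and $\A^-$ have the same intersection lattice, exactly as you explain.
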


\begin{rmk}
	It can be verified that the LCS formula fails for these arrangements.
\end{rmk}

\begin{cor}\label{cor:fondagrp}
  The fundamental group of the complement of a real complexified arrangement $\A$ is not determined by its intersection lattice $\L(\A)$.
\end{cor}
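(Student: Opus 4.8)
The plan is to deduce this statement directly from Theorem~\ref{thm:main} together with the fact that the lower central series quotients are functorial invariants of a group. First I would recall that the arrangements $\A^+$ and $\A^-$ are both real complexified (indeed their defining equations, displayed above, have real coefficients), and that by construction they share the same intersection lattice $\L$: this is precisely the content of the combinatorics listed in Section~\ref{sec:ZP}, with $\A^+\in\Sigma_0$ and $\A^-\in\Sigma_1$ being two realizations of $\L$. Thus $\L(\A^+)\sim\L(\A^-)$, so if $G_\A$ were determined by $\L(\A)$ we would necessarily have an isomorphism $G_{\A^+}\cong G_{\A^-}$.

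The key step is then to observe that such an isomorphism is impossible. If $\varphi:G_{\A^+}\to G_{\A^-}$ were an isomorphism, then, since the terms of the lower central series are characteristic subgroups, $\varphi$ would carry $\gamma_k(G_{\A^+})$ onto $\gamma_k(G_{\A^-})$ for every $k$, inducing group isomorphisms $\gr_k(G_{\A^+})\cong\gr_k(G_{\A^-})$ for all $k\geq 1$. In particular we would get $\gr_4(G_{\A^+})\cong\gr_4(G_{\A^-})$. But Theorem~\ref{thm:main}(2) (equivalently, the explicit primary decompositions $\gr_4(G_{\A^+})\simeq\ZZ^{211}\oplus\ZZ_2$ and $\gr_4(G_{\A^-})\simeq\ZZ^{211}$ obtained in its proof) shows that the first group has a non-trivial $2$-torsion element whereas the second is torsion-free. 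Since torsion is an isomorphism invariant of a finitely generated abelian group, these two groups are not isomorphic, a contradiction. (The quotient $\gr_5$ furnishes a second, independent obstruction.)

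Therefore no isomorphism $G_{\A^+}\cong G_{\A^-}$ exists, and $G_\A$ is not a function of $\L(\A)$ even within the class of real complexified arrangements, which is what we wanted to prove. The only genuine input here is Theorem~\ref{thm:main}, whose proof is the computational heart of the paper (carried out in \texttt{GAP} via the package \texttt{nq}, see Appendix~\ref{sec:appendix}); given that, the deduction is the routine functoriality argument sketched above, so there is no real obstacle in this corollary beyond correctly invoking the theorem. One should perhaps note explicitly, for completeness, that this also recovers Corollary~\ref{cor:torsion} and, a fortiori, a negative answer to Problem~\ref{pb:FR}, since a homotopy equivalence of complements would in particular induce an isomorphism of fundamental groups.
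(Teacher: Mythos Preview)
Your argument is correct and is precisely the deduction the paper intends: the corollary is stated without proof because it follows immediately from Theorem~\ref{thm:main} via the functoriality of the LCS quotients, applied to the lattice-equivalent real complexified arrangements $\A^+$ and $\A^-$. There is nothing to add.
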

%

\begin{rmk}
	The previous results can be verified by using a presentation coming from the braid monodromy, as it is explained in Section~\ref{sec:group}. Note that Theorem~\ref{thm:main} implies in particular that $\A^+$ and $\A^-$ have non-equivalent braid monodromies.
\end{rmk}



	
\bigskip
\section{Appendix}\label{sec:appendix}
	
In this appendix, we provide a \texttt{GAP} code
which computes finite presentations of the fundamental groups of the arrangements defined above, by using Randell's method (Theorem~\ref{thm:arvola}). From this, we compute the respective lower central series quotients mentioned in Theorem~\ref{thm:main}.

\lstinputlisting{code.txt}

\def\cprime{$'$}
\providecommand{\bysame}{\leavevmode\hbox to3em{\hrulefill}\thinspace}
\providecommand{\MR}{\relax\ifhmode\unskip\space\fi MR }
\providecommand{\MRhref}[2]{%
  \href{http://www.ams.org/mathscinet-getitem?mr=#1}{#2}
}
\providecommand{\href}[2]{#2}

\end{document}